\newtheorem{theorem}{Theorem}[section]
\newtheorem{lem}[theorem]{Lemma}
\newtheorem{con}[theorem]{Conjecture}
\theoremstyle{definition}
\newtheorem{remark}[theorem]{Remark}
\numberwithin{equation}{section}
\DeclareMathOperator{\dia}{diag}
\newcommand{\eps}{\varepsilon}
\newcommand{\N}{\ensuremath{{\mathbb N}}}
\newcommand{\R}{\ensuremath{{\mathbb R}}}
\newcommand{\E}{\ensuremath{{\mathbb E}}}
\newcommand{\Pro}{\ensuremath{{\mathbb P}}}
\newcommand{\norm}[1]{\left \lVert#1 \right\rVert}
\newcommand{\abs}[1]{\left\lvert#1 \right\rvert}
\newcommand{\vertiii}[1]{{\left\vert\kern-0.25ex\left\vert\kern-0.25ex\left\vert #1
		\right\vert\kern-0.25ex\right\vert\kern-0.25ex\right\vert}}
\author{Olivier Gu\'edon\and Aicke Hinrichs  \and Alexander E. Litvak \and Joscha Prochno}
\address[Olivier Gu\'edon]{Laboratoire d'Analyse et de Math\'ematiques Appliqu\'ees\\
	Universit\'e Paris-Est\\
    77454 Marne-la-Vall\'ee, Cedex 2\\
	France}
\email{olivier.guedon@u-pem.fr}
\address[Alexander E. Litvak]{Department of Mathematical and Statistical Sciences\\
University of Alberta\\
Edmonton, AB, T6G 2G1\\
Canada}
\email{alitvak@ualberta.ca}
\address[Aicke Hinrichs]{Institut f\"ur Analysis\\
Johannes Kepler Universit\"at Linz\\
Altenbergerstrasse 69\\
4040 Linz\\
Austria}
\email{aicke.hinrichs@jku.at}
\address[Joscha Prochno]{Department of Mathematics\\
	University of Hull\\
	Cottingham Road\\
	Hull, HU6 7RX\\
	United Kingdom}
\email{j.prochno@hull.ac.uk}
\thanks{J.P. was supported in part by the Austrian Science Fund grant FWFM 1628000 ``Local Theory of Banach Spaces and Convex Geometry''.}
\keywords{Gaussian random matrix, operator norm, moments of random vectors}
\subjclass[2000]{Primary 60B20; Secondary 46B09.}
\begin{document}

\title[Operator norms of random matrices]{On the expectation of operator norms of random matrices}

\begin{abstract}
We prove estimates for the expected value of operator norms of Gaussian random matrices with independent and mean-zero entries, acting as operators from $\ell^n_{p^*}$ to $\ell_q^n$, $1\leq p^* \leq 2 \leq q \leq \infty$.
\end{abstract}
\maketitle


\section{Introduction and main results}

Random matrices and their spectra are under intensive study in Statistics since the work of Wishart \cite{Wish} on sample covariance matrices, in Numerical Analysis since their introduction by von Neumann and Goldstine \cite{vN} in the $1940$s, and in Physics as a consequence of Wigner's work \cite{Wi1,Wi2} since the $1950$s. His Semicircle Law, a fundamental theorem in the spectral theory of large random matrices describing the limit of the empirical spectral measure for what is nowadays known as Wigner matrices, is among the most celebrated results of the theory.

In Banach Space Theory and Asymptotic Geometric Analysis, random matrices appeared already in the $70$s (see e.g.  \cite{BGN,BGor, G-PTRF}). In \cite{BGN}, the authors obtained
asymptotic bounds for the expected value of the operator norm of a random matrix $B=(b_{ij})_{i,j=1}^{m,n}$ with independent mean-zero entries with $|b_{ij}|\leq 1$ from $\ell_2^n$ to $\ell_q^m$, $2 \leq q<\infty$. To be more precise, they proved that
\[
\E\, \big\|B:\, \ell_2^n \to \ell_q^m\big\| \leq C_q \cdot \max\{m^{1/q},\sqrt{n} \},
\]
where $C_q$ depends only on $q$.
This was then successfully used to characterize $(p,q)$-absolutely summing operators on Hilbert spaces. Ever since, random matrices are extensively studied and methods of Banach spaces  have produced numerous deep and new results.
In particular, in many applications the spectral properties of a Gaussian
matrix, whose entries are independent identically distributed (i.i.d.) standard Gaussian random variables, were  used. Moreover,  Seginer  \cite{Seg} proved  that for an $m\times n$ random matrix with i.i.d. symmetric random variables the expectation of its spectral norm (that is, the operator norm  from $\ell_2^n$ to $\ell_2^m$) is of the order of the expectation of the largest Euclidean norm of its rows and columns. He also proved an optimal result in  the case of random matrices with entries $\eps_{ij} a_{ij}$ where $\eps_{ij}$ are independent Rademacher random variables and $a_{ij}$ are fixed numbers.
We refer the interested reader to the survey \cite{DS-sur1, DS-sur2} and references therein.

It is natural to ask similar questions about general
random matrices, in particular about Gaussian matrices whose entries are still independent centered Gaussian random variables, but with different variances. In this case, where we drop the assumption of identical distributions, very little is known.
It is conjectured that the expected spectral norm of such a Gaussian matrix
is as in Seginer's result, that is, of the order of the expectation of the largest Euclidean norm of its rows and columns. A big step toward the solution was made by Lata{\l}a in \cite{L}, who proved a bound involving fourth moments, which is of the right order $\max\{ \sqrt{m},\sqrt{n}\}$ in the i.i.d. setting. On one hand, in view of the classical Bai-Yin theorem, the presence of fourth moments is not surprising, on the other hand they are not needed if the conjecture is true.

Later in \cite{RS}, Riemer and Sch\"utt proved the conjecture up to a $\log n$ factor. The two results are incomparable -- depending on the choice of variances, one or another gives a better bound. The Riemer-Sch\"utt estimate was used recently in \cite{RZ}.

Another big step toward the solution was made a short while ago by Bandeira and Van Handel \cite{BvanH}. In particular, they proved that
\begin{equation}
\label{eq:BvanH}
\E\, \big\| (a_{ij} g_{ij}) : \ell_2^m \to \ell_2^n \big\| \le C \Big( \vertiii A  + \sqrt{\log \max(n,m)}  \max_{ij} | a_{ij}| \Big)
\end{equation}
where $\vertiii A$ denotes the largest Euclidean norm of the rows and columns of $(a_{ij})$, $C>0$ is a universal constant,  and $g_{ij}$ are independent standard Gaussian random variables. This was further developed by Van Handel \cite{vanH} to verify
the conjecture up to a $\log \log n$  factor. In fact, more was proved in \cite{vanH}. He computed precisely
the expectation of the largest Euclidean norm of the rows and columns using Gaussian concentration. And, while the moment method is at the heart of the proofs in  \cite{Seg} and \cite{BvanH}, he proposed a very nice approach based on the comparison of Gaussian processes to improve the result of Lata\l a. However, his approach is based on a trick using symmetric matrices and this cannot be generalized to study other operator norms of the matrix.

The purpose of this work is to provide some bounds for operator norms of such matrices considered as operators from $\ell_p^m$ to $\ell_q^n$.

In what follows, by $g_i$, $g_{ij}$, $i\geq 1$, $j\geq 1$ we always denote  independent standard Gaussian random variables.
Let $n,m\in\N$ and $A=(a_{ij})_{i,j=1}^{m,n} \in\R^{m\times n}$. We write $G=G_A=(a_{ij}g_{ij})_{i,j=1}^{m,n}$. For $r\geq 1$, we denote by $\gamma_r\approx \sqrt{r}$ the $L_r$-norm  of a standard Gaussian random variable. The notation $f\approx h$ means that there are two absolute
positive constants $c$ and $C$ (that is, independent of any parameters)
such that $c f\leq h \leq C f$.


%
%

Our main result is the following theorem.

\begin{theorem}\label{thm:main}
	For every $1<p^*\leq 2 \leq q < \infty$ one has
	\begin{align*}
	\E\, \big\|G:\ell_{p^*}^m\to\ell_q^n\big\|
    &\leq \Big(\E\, \big\|G:\ell_{p^*}^m\to\ell_q^n\big\|^q \Big)^{1/q} \\
	& \leq C\, p^{5/q}\, (\log m)^{1/q}\, \bigg[\, \gamma_p\,
     \max_{i \leq m}\|(a_{ij})_{j=1}^n\|_p + \gamma_q\,
     \E\max_{i\leq m\atop j\leq n}|a_{ij}g_{ij}| \,\bigg] \\
	& + 2^{1/q}\, \gamma_q \, \max_{j \leq n}\|(a_{ij})_{i=1}^m\|_q,
	\end{align*}
where $C$ is a positive absolute constant.
\end{theorem}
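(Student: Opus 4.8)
The first inequality is simply the monotonicity of $L_r$-norms in $r$ applied to the nonnegative random variable $\|G\|$, so the whole problem reduces to estimating the $q$-th moment $\E\|G\|^q$. I would begin by recording the representation
$$\big\|G:\ell_{p^*}^m\to\ell_q^n\big\| = \sup_{\|x\|_{p^*}\le1}\Big\|\sum_{i=1}^m x_i V_i\Big\|_q, \qquad V_i := (a_{ij}g_{ij})_{j=1}^n,$$
where $V_1,\dots,V_m$ are independent random vectors in $\ell_q^n$ with independent centred Gaussian coordinates. The dual description $\|G\|=\sup_{\|y\|_{q^*}\le1}\|(\langle V_i,y\rangle)_{i\le m}\|_p$ will also be useful, since $p^*\le2\le q$ forces $q^*\le2\le p$ and renders the two pictures symmetric. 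For a \emph{fixed} $x$ the coordinates of $Gx$ are independent Gaussians, so $\E\|Gx\|_q^q=\gamma_q^q\sum_j(\sum_i a_{ij}^2x_i^2)^{q/2}$ is completely explicit; the entire difficulty is the passage from one fixed $x$ to the supremum over the ball $B_{p^*}^m$.

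The plan is to bound the $q$-th moment by an \emph{additive} decomposition of the form
$$\E\|G\|^q \le 2\,\gamma_q^q\max_{j\le n}\big\|(a_{ij})_{i=1}^m\big\|_q^q \;+\; \Big(C\,p^{5/q}(\log m)^{1/q}\big[\,\gamma_p\max_{i\le m}\|(a_{ij})_{j=1}^n\|_p+\gamma_q\,\E\max_{ij}|a_{ij}g_{ij}|\,\big]\Big)^q,$$
and then to take $q$-th roots using the subadditivity of $t\mapsto t^{1/q}$; this is exactly the shape of the claimed estimate, with the column term carrying the harmless constant $2^{1/q}$ and no logarithm. The first, ``baseline'' summand should capture the part of the supremum that persists on average: exploiting the independence of the codomain coordinates together with a Gaussian moment computation, the bulk of the image has $\ell_q$-size governed by the largest $\ell_q$-norm of the columns $(a_{ij})_i$, which is what produces $\gamma_q\max_j\|(a_{ij})_i\|_q$.

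The second, ``fluctuation'' summand is where the real work lies and where the factors $p^{5/q}$, $(\log m)^{1/q}$ and the maximal-entry quantity enter. Here I would reduce the supremum over the continuous ball $B_{p^*}^m$ to a maximum over the $m$ coordinate directions of the domain: the peaky behaviour of $\sum_i x_iV_i$ is controlled by the individual rows $V_i$, and a Gaussian maximal inequality over the $m$ associated sub-Gaussian quantities contributes the factor $\log m$ inside the $q$-th moment, hence $(\log m)^{1/q}$ after taking roots. To isolate the genuinely large entries I would truncate each $a_{ij}g_{ij}$ at a level comparable to $\E\max_{ij}|a_{ij}g_{ij}|$: the bounded part is absorbed into a row contribution $\gamma_p\max_i\|(a_{ij})_j\|_p$ through Rosenthal-type and Gaussian hypercontractive moment inequalities (each application costing a power of $p$, which is what ultimately assembles the exponent $p^{5}$), while the excess is dominated directly by $\gamma_q\,\E\max_{ij}|a_{ij}g_{ij}|$.

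The main obstacle is precisely the uniform control of the supremum over $B_{p^*}^m$ with only a \emph{logarithmic}, rather than polynomial, loss in the domain dimension $m$: a naive $\eps$-net on the ball costs $m^{1/q}$ or worse and is useless here. Making the logarithmic bound work requires genuinely using that $q\ge2$, so that the relevant one-dimensional marginals have sub-Gaussian integrability and their maxima over $m$ directions cost only $\sqrt{\log m}$, and it requires reconciling the $\ell_{p^*}$-geometry of the domain with the $\ell_q$-geometry of the codomain, which is exactly what forces the row $\ell_p$-norms and the constants $\gamma_p,\gamma_q$ into the final estimate. Tracking the dependence on $p$ carefully through the successive moment inequalities, so as to reach the clean exponent $p^{5/q}$, will be the most delicate bookkeeping.
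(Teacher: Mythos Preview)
Your overall architecture is right: split $\E\|G\|^q$ into a ``baseline'' term that produces $2^{1/q}\gamma_q\max_j\|(a_{ij})_i\|_q$ and a ``fluctuation'' term carrying $(\log m)^{1/q}$ and the row/maximal-entry quantities. You have also correctly located the main obstacle, namely passing from the supremum over the full ball $B_{p^*}^m$ to a maximum over the $m$ coordinate directions with only logarithmic loss. The gap is that you have not supplied a mechanism that actually performs this reduction. Saying that ``the peaky behaviour of $\sum_i x_iV_i$ is controlled by the individual rows $V_i$'' is the conclusion one wants, not an argument; for $p^*>1$ the extreme points of $B_{p^*}^m$ are not the coordinate vectors, and no Gaussian maximal inequality over $m$ directions, nor any entrywise truncation at the level $\E\max_{ij}|a_{ij}g_{ij}|$, by itself reduces $\sup_{x\in B_{p^*}^m}$ to $\max_{i\le m}$. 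Your truncation/Rosenthal programme would control $(\E\max_i\|V_i\|_p^q)^{1/q}$, which is useful, but it does not touch the supremum over $x$.

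In the paper this step is done by an empirical-process bound of Gu\'edon--Rudelson type (Theorem~\ref{thm:GR}): one writes $\|G\|^q=\sup_{y\in B_{p^*}^n}\sum_i|\langle X_i,y\rangle|^q$, subtracts and adds the expectations, and bounds the centred supremum by $B^2+B\sigma^{q/2}$ with
\[
B=C\,\lambda^4\,T_2(\ell_p^n)\sqrt{\tfrac{\log m}{m}}\,\big(\E\max_i\|X_i\|_p^q\big)^{1/2}.
\]
This is where the geometry of $\ell_{p^*}$ enters, through its modulus of convexity of power type $2$ with constant $\lambda\approx\sqrt{p}$ and the type-$2$ constant $T_2(\ell_p)\approx\sqrt{p}$; hence $\lambda^4T_2\approx p^{5/2}$, and after the exponent $2/q$ one gets exactly $p^{5/q}$. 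So the factor $p^{5/q}$ does not come from repeated Rosenthal/hypercontractive constants as you suggest, but from these two Banach-space invariants. Once the empirical-process theorem has reduced matters to $(\E\max_i\|X_i\|_p^q)^{1/q}$, your idea of separating the deterministic row size from the Gaussian fluctuation is essentially Lemma~\ref{lem: estimate expectation max pnorm X}, proved via Gaussian concentration rather than truncation; and your baseline term is exactly the quantity $\sigma$ computed in Lemma~\ref{lem:estimate sigma}.
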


We conjecture the following bound.

\begin{con}
	For every $1\leq p^*\leq 2 \leq q \leq \infty$ one has
	\begin{align*}
	\E\, \big\|G:\ell_{p^*}^m\to\ell_q^n\big\|
	& \approx_{p,q} \max_{ i \leq m}\|(a_{ij})_{j=1}^n\|_p
	+  \max_{ j \leq n}\|(a_{ij})_{i=1}^m\|_q +
    \E\max_{i\leq m\atop j\leq n}|a_{ij}g_{ij}|.
	\end{align*}
\end{con}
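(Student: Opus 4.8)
The statement is a two-sided equivalence, so I would prove the two inequalities separately. The lower bound $\E\|G\|\gtrsim_{p,q}(\text{three terms})$ is elementary and in fact yields constants that do not depend on $p,q$ at all, while the upper bound is the genuine difficulty.

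\textbf{Lower bound.} Write $\mathcal C_j=(a_{ij})_{i=1}^m$ and $\mathcal R_i=(a_{ij})_{j=1}^n$ for the columns and rows, and let $1/p+1/p^*=1/q+1/q^*=1$. Since every standard unit vector has norm one in each $\ell_r$, testing $G$ and its adjoint $G^*$ on the unit vectors of the domain and of the codomain picks out, respectively, a single Gaussian-weighted column measured in $\ell_q$ and a single Gaussian-weighted row measured in $\ell_p$, so that for each $i,j$
\[
\|G\|\ \ge\ \big\|(a_{ij}g_{ij})_{i=1}^m\big\|_q,\qquad
\|G\|\ \ge\ \big\|(a_{ij}g_{ij})_{j=1}^n\big\|_p,\qquad
\|G\|\ \ge\ \max_{ij}|a_{ij}g_{ij}|.
\]
Taking expectations and applying Minkowski's integral inequality (valid since $p,q\ge1$) gives
\[
\E\,\big\|(a_{ij}g_{ij})_{i=1}^m\big\|_q\ \ge\ \Big(\sum_{i}\big(\E|a_{ij}g_{ij}|\big)^q\Big)^{1/q}=\gamma_1\,\|\mathcal C_j\|_q,
\]
and likewise $\E\|(a_{ij}g_{ij})_{j=1}^n\|_p\ge\gamma_1\|\mathcal R_i\|_p$, with $\gamma_1=\sqrt{2/\pi}$. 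Maximizing over $j$ and over $i$, each of the three terms on the right is at most an absolute constant times $\E\|G\|$, and hence so is their sum; this gives the $\gtrsim$ half.

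\textbf{Upper bound (the hard direction).} Here I would pass to the Gaussian-process viewpoint. Writing $g=(g_{ij})$ and
\[
T=\big\{(a_{ij}x_iy_j)_{ij}\ :\ x\in B_{p^*},\ y\in B_{q^*}\big\},
\]
we have $\|G\|=\sup_{u\in T}\langle g,u\rangle$, so by the majorizing measure theorem of Fernique and Talagrand $\E\|G\|\approx\gamma_2(T,\|\cdot\|_2)$, the Talagrand functional for the Euclidean metric inherited from the coefficient tensors. The plan is to estimate $\gamma_2(T)$ by a multiscale net adapted to the variance profile $(a_{ij})$: one splits $T$ into a spiky part supported on the few indices where $|a_{ij}|$ is largest, which is responsible for the term $\E\max_{ij}|a_{ij}g_{ij}|$, and a bulk part. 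For the bulk I would bound the entropy numbers of $B_{p^*}$ and $B_{q^*}$ via the dual Sudakov and Maurey empirical-method estimates and feed them into a Dudley integral, arranging the scales so that the row contribution aggregates to $\max_i\|\mathcal R_i\|_p$ and the column contribution to $\max_j\|\mathcal C_j\|_q$; the factors $\gamma_p,\gamma_q\approx\sqrt p,\sqrt q$ produced by these net estimates are absorbed into the $\approx_{p,q}$ constants.

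\textbf{The main obstacle} is exactly the one that keeps this statement conjectural. A single global $\varepsilon$-net over $B_{p^*}\times B_{q^*}$, controlled by a union bound over the $m$ rows, is precisely what produces the spurious $(\log m)^{1/q}$ factor in Theorem~\ref{thm:main}, and removing it requires genuine chaining that is \emph{sensitive to the weights} $a_{ij}$ rather than to the ambient dimension. In the Hilbertian case $p^*=q=2$ this is where Van Handel's symmetrization enters, reducing \eqref{eq:BvanH} to a spectral statement about a symmetric matrix; but, as already noted in the introduction, that device does not survive the passage to general $\ell_{p^*}\to\ell_q$. A new, profile-adapted chaining — or an inductive peeling of the largest entries that feeds back into a renormalized process — therefore seems unavoidable. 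I expect controlling $\gamma_2(T,\|\cdot\|_2)$ uniformly in the variance profile, equivalently producing the correct interplay of the row, column and max-entry terms with no dimensional logarithm, to be the crux and the step most likely to stall.
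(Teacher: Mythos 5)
A preliminary point of fact: the paper does not prove this statement. It is posed as a Conjecture, and the paper's own contribution, Theorem~\ref{thm:main}, establishes only the upper half with the parasitic factor $p^{5/q}(\log m)^{1/q}$ (and only in the range $1<p^*\le 2\le q<\infty$, not at the endpoints the conjecture includes). So there is no proof in the paper to compare yours against, and no complete blind proof should have been possible. Within that frame, your lower bound is correct and complete: testing on coordinate vectors of the domain ball and of the dual ball of the target gives, pointwise, $\|G\|\ge\|(a_{ij}g_{ij})_{i=1}^m\|_q$ for each $j$, $\|G\|\ge\|(a_{ij}g_{ij})_{j=1}^n\|_p$ for each $i$ (the dual norm of $\ell_{p^*}$ being $\ell_p$), and $\|G\|\ge\max_{ij}|a_{ij}g_{ij}|$; your Minkowski/Jensen step $\|\E Z\|_q\le\E\|Z\|_q$ then yields $\E\|G\|\ge\gamma_1\max_j\|(a_{ij})_{i=1}^m\|_q$ and $\E\|G\|\ge\gamma_1\max_i\|(a_{ij})_{j=1}^n\|_p$ with $\gamma_1=\sqrt{2/\pi}$, and the argument survives the endpoints $p^*=1$, $q=\infty$ with absolute constants, exactly as you claim. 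This half is standard folklore (it is why the conjectured right-hand side is the natural candidate), which is presumably why the paper does not record it.

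Your upper bound, however, is a research program, not a proof, and the specific route you sketch would stall earlier than you suggest. Reducing to $\gamma_2(T,\|\cdot\|_2)$ via majorizing measures is sound, but the proposed treatment of the bulk --- dual Sudakov and Maurey entropy estimates fed into a Dudley integral --- is itself a union-bound-over-scales argument, and it is precisely this mechanism (in the guise of Theorem~\ref{thm:GR}, whose proof rests on an $\varepsilon$-net/concentration argument over the $m$ rows) that produces the $(\log m)^{1/q}$ in Theorem~\ref{thm:main}; Dudley's integral is lossy for exactly the sets $T$ arising here, so ``arranging the scales'' so that the row and column contributions aggregate to $\max_i\|(a_{ij})_{j=1}^n\|_p$ and $\max_j\|(a_{ij})_{i=1}^m\|_q$ with no dimensional factor is not a technical step one can defer --- it is the entire open problem. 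Your diagnosis of this is accurate and matches the paper's own discussion: even for $p^*=q=2$ the best known results are \eqref{eq:BvanH}, with $\sqrt{\log\max(n,m)}\max_{ij}|a_{ij}|$ in place of $\E\max_{ij}|a_{ij}g_{ij}|$, and Van Handel's $\log\log n$ bound, whose symmetric-matrix trick does not extend to general $(p^*,q)$. In sum: you have genuinely proved one inequality of the equivalence, correctly located the obstruction in the other, and the statement itself remains open --- as the paper intends it to be.
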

The notation $f\approx _{p,q} h$ means that there are two
positive constants $c(p, q)$ and $C(p,q)$, which depend only on
the parameters $p$ and $q$, such that $c(p, q) f\leq h \leq C(p, q) f$, and,
as usual $1/p+1/p^*=1$.
This conjecture extends the corresponding conjecture for the case $p=q=2$
and $m=n$. In this case, Bandeira and Van Handel \cite{BvanH} (see  \eqref{eq:BvanH})
proved an estimate with $\sqrt{\log \max(m,n)} \max |a_{ij}|$ instead of  $\E\max |a_{ij}g_{ij}|$,
while in  \cite{vanH} the corresponding bound is proved with  $\log \log n$ in front of
the right hand side.

\begin{remark}
	Note that if $1\leq p^* \leq 2 \leq q \leq \infty$, in the case of matrices of tensor structure, that is, $(a_{ij})_{i,j=1}^n = x\otimes y = (x_j\cdot y_i)_{i,j=1}^n$, with $x,y\in\R^n$, Chevet's theorem \cite{Chev,BGor} and a direct computation show that
	\[
	\E\,\big\|G:\ell^n_{p^*}\to\ell^n_{q}\big\|
	\approx_{p,q} \|y\|_q\|x\|_\infty + \|y\|_\infty\|x\|_p.
	\]
	If the matrix is diagonal, that is, $(a_{ij})_{i,j=1}^n = \dia(a_{11},\dots,a_{nn})$, then we immediately obtain
		\[
		\E\,\big\|G:\ell_{p^*}^n\to \ell_q^n\big\| = \E\,\|(a_{ii}g_{ii})_{i=1}^n\|_{\infty}\approx \max_{i\leq n }
		\sqrt{\ln(i+3)} \,\cdot a_{ii}^* \approx \|(a_{ii})_{i=1}^n\|_{M_g},
		\]
		where $(a_{ii}^*)_{i\leq n}$ is the decreasing rearrangement
		of $(|a_{ii}|)_{i\leq n}$ and $M_g$ is the Orlicz function given by
		\[
		M_g(s) = \sqrt{\frac{2}{\pi}}\int_0^s e^{-\frac{1}{2t^2}} \,dt
		\]
		(see Lemma \ref{max-gaus} below and \cite[Lemma 5.2]{GLSW2} for the Orlicz norm expression).
		
		Slightly different estimates of the same flavour can be also obtained in the case $1\leq q \leq 2 \leq p^* \leq \infty$.
\end{remark}

\section{Notation and Preliminaries}


By $c, C, C_1, ...$ we always denote positive absolute constants,
whose values may change from line to line, and we write $c_p, C_p, ...$
if constants depend on some parameter $p$.



Given $p\in [1, \infty]$, $p^*$ denotes its conjugate, that is $1/p + 1/p^*=1$.
For $x=(x_i)_{i\leq n}\in \R^n$, $\|x\|_p$ denotes its $\ell_p$-norm, that is $\|x\|_{\infty}=\max_{i\leq n} |x_i|$ and, for $p<\infty$,
$$
  \|x\|_p = \Big(\sum_{i=1}^n |x_i|^p \Big)^{1/p}.
$$
The corresponding space $(\R^n,\|\cdot\|_p)$ is denoted by $\ell_p^n$, its unit ball by $B_p^n$.

If $E$ is a normed space, then $E^*$ denotes its dual space and $B_E$ its closed unit ball. The modulus of convexity of $E$ is defined for any $\varepsilon\in (0,2)$ by
\[
\delta_E(\varepsilon):=\inf\Big\{ 1-\Big\| \frac{x+y}{2} \Big\|_E \,:\, \|x\|_E=1,~ \|y\|_E=1,~ \|x-y\|_E>\varepsilon \Big\}.
\]
We say that $E$ has modulus of convexity of power type 2 if there exists a positive constant $c$ such that for all $\varepsilon\in(0,2)$, $\delta_E(\varepsilon)\geq c\varepsilon^2$. It is well known that this property (see e.g. \cite{F} or \cite[Proposition 2.4]{Pis75}) is equivalent to the fact that
\[
\Big\| \frac{x+y}{2} \Big\|_E^2 + \lambda^{-2}\Big\| \frac{x-y}{2} \Big\|_E^2 \leq \frac{\|x\|_E^2+\|y\|_E^2}{2}
\]
holds for all $x,y\in E$, where $\lambda>0$ is a constant depending only on $c$. In that case, we say that $E$ has modulus of convexity of power type 2 with constant $\lambda$.  We clearly have $\delta_E(\varepsilon) \geq \varepsilon^2/(2\lambda^2)$. It follows from Clarkson's inequality \cite{Clarkson} that for $p>2$ the space
$\ell_{p^*}^n$ has modulus of convexity of power type 2 with
\[
	\lambda^{-2}=\frac{p^*(p^*-1)}{8} \approx \frac{1}{p}.
\]
Recall that a Banach space $E$ is of Rademacher type $r$ for some $1\leq r \leq 2$ if there is $C>0$ such that for all $n\in \mathbb N$ and for all $x_1,\dots,x_n\in E$,
\[
\bigg(\mathbb E_{\varepsilon}\Big\|\sum_{i=1}^n\varepsilon_i x_i\Big\|^2\bigg)^{1/2} \leq C \left( \sum_{i=1}^n\|x_i\|^r \right)^{1/r},
\]
where $(\varepsilon_i)_{i=1}^\infty$ is a sequence of independent random variables defined on some probability space $(\Omega, \Pro)$
such that $\Pro(\varepsilon_i=1)=\Pro(\varepsilon_i=-1)=\frac{1}{2}$ for every $i\in\N$. The smallest $C$ is called type-$r$ constant of $E$, denoted by $T_r(E)$. This concept was introduced into Banach space theory by Hoffmann-J\o rgensen \cite{HJ} in the early $1970$s and the basic theory was developed by Maurey and Pisier \cite{MaureyPisier}.

We will need the following theorem, which follows from \cite{GR} in combination with an improvement that can be found in \cite{GMPT}.

\begin{theorem}\label{thm:GR}
	Let $E$ be a Banach space with modulus of convexity of power type 2 with constant $\lambda$. Let $X_1,\dots,X_m\in E^*$ be independent random vectors. For $q\geq 2$, if
	\[
	B:= C \lambda^4T_2(E^*)\sqrt{\frac{\log m}{m}}\Big( \E \max_{1\leq i\leq m}\|X_i\|_{E^*}^q\Big)^{1/2},
	\]
	where $T_2(E^*)$ is the type 2 constant of $E^*$, and
\[
	\sigma :=\sup_{y\in B_E}\left( \frac{1}{m} \sum_{i=1}^m
     \E |\langle X_i,y \rangle|^q\right)^{1/q} ,
\]
	then
\begin{align*}
	\E \sup_{y\in B_E} \bigg| \frac{1}{m}\sum_{i=1}^m |\langle X_i,y
    \rangle|^q - \E|\langle X_i,y \rangle|^q\bigg|
	& \leq A^2 + A\cdot \sigma^{q/2} .
	\end{align*}
 \end{theorem}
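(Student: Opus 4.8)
The plan is to bound the left-hand side, which I write as $D := \E\sup_{y\in B_E}\big|\frac1m\sum_{i=1}^m(|\langle X_i,y\rangle|^q-\E|\langle X_i,y\rangle|^q)\big|$, by combining a symmetrization step, a Rudelson-type inequality for the resulting Rademacher process, and an elementary quadratic inequality that trades the empirical $\ell_q$-variance for the population quantity $\sigma$. Write $A$ for the constant defined in the hypothesis (denoted $B$ there). First I would symmetrize: introducing independent Rademacher signs $\varepsilon_1,\dots,\varepsilon_m$ independent of the $X_i$, the standard Gin\'e--Zinn symmetrization gives
\[
D \;\le\; \frac{2}{m}\,\E_X\,\E_\varepsilon\,\sup_{y\in B_E}\Big|\sum_{i=1}^m \varepsilon_i\,|\langle X_i,y\rangle|^q\Big| .
\]
This reduces the task to controlling, conditionally on the $X_i$, the Rademacher process $y\mapsto\sum_i\varepsilon_i|\langle X_i,y\rangle|^q$ on $B_E$.

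The core estimate --- and the \emph{main obstacle} --- is a Rudelson-type bound for this conditional process, namely
\[
\E_\varepsilon\,\sup_{y\in B_E}\Big|\sum_{i=1}^m \varepsilon_i|\langle X_i,y\rangle|^q\Big|
\;\le\; C\lambda^4 T_2(E^*)\sqrt{\log m}\;\max_{i\le m}\|X_i\|_{E^*}^{q/2}\,\Big(\sup_{y\in B_E}\sum_{i=1}^m|\langle X_i,y\rangle|^q\Big)^{1/2}.
\]
The mechanism behind the square root on the right is the factorization $|\langle X_i,y\rangle|^q=\big(|\langle X_i,y\rangle|^{q/2}\big)^2$: in a chaining/majorizing-measure analysis of the increments $|\langle X_i,y\rangle|^q-|\langle X_i,y'\rangle|^q=(f_i(y)-f_i(y'))(f_i(y)+f_i(y'))$ with $f_i(y)=|\langle X_i,y\rangle|^{q/2}$, one factor is pulled out in sup-norm --- contributing $\max_i\sup_{y\in B_E}f_i(y)=\max_i\|X_i\|_{E^*}^{q/2}$ --- while the second produces the $\ell_q$-variance factor. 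The entropy integral over $B_E$ is controlled through the modulus of convexity of $E$ of power type $2$ with constant $\lambda$ (equivalently the uniform smoothness of $E^*$) together with the type-$2$ constant $T_2(E^*)$, and this is exactly the quantitative content imported from \cite{GR} with the improved dependence on $\lambda$ and $T_2(E^*)$ from \cite{GMPT}; the $\sqrt{\log m}$ is the usual entropy factor. I expect this inequality to be the heart of the argument and would cite it rather than reprove the chaining from scratch.

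It remains to glue these together. Taking $\E_X$ of the displayed Rudelson bound and applying the Cauchy--Schwarz inequality in $X$ to separate the two factors, the sup-norm factor contributes $(\E\max_{i\le m}\|X_i\|_{E^*}^q)^{1/2}$; since $\frac{\sqrt{\log m}}{m}(\E\sup_y\sum_i|\langle X_i,y\rangle|^q)^{1/2}=\sqrt{\tfrac{\log m}{m}}(\E\sup_y\frac1m\sum_i|\langle X_i,y\rangle|^q)^{1/2}$, recombining the constants into $A$ shows
\[
D \;\le\; 2A\,\Big(\E\,\sup_{y\in B_E}\frac1m\sum_{i=1}^m|\langle X_i,y\rangle|^q\Big)^{1/2}.
\]
Finally I would bound the empirical $\ell_q$-average by the population one plus the fluctuation: from $\sup_y\frac1m\sum_i|\langle X_i,y\rangle|^q\le\sigma^q+\sup_{y}\big|\frac1m\sum_i(|\langle X_i,y\rangle|^q-\E|\langle X_i,y\rangle|^q)\big|$, taking expectations gives $\E\sup_y\frac1m\sum_i|\langle X_i,y\rangle|^q\le\sigma^q+D$. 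Substituting yields the self-bounding inequality $D\le 2A(\sigma^q+D)^{1/2}\le 2A\sigma^{q/2}+2A\sqrt{D}$. Viewing this as a quadratic inequality in $\sqrt D$ and solving gives $D\le C(A^2+A\sigma^{q/2})$, which is the asserted bound, the absolute constant being absorbed into the constant defining $A$. The only genuinely hard input is the Rudelson-type inequality of the second paragraph; steps one and three are routine.
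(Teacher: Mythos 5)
The paper does not actually prove this statement: Theorem \ref{thm:GR} is imported as a black box, ``which follows from \cite{GR} in combination with an improvement that can be found in \cite{GMPT}.'' Your proposal is correct and reconstructs essentially the argument of those cited references: Gin\'e--Zinn symmetrization, the Rudelson-type chaining bound for the conditional Rademacher process (the genuinely hard step, which you cite rather than prove---exactly as the paper cites the whole theorem, so nothing is lost), Cauchy--Schwarz in $X$, and the self-bounding quadratic step. Your closing algebra is sound: $D\le 2A(\sigma^q+D)^{1/2}\le 2A\sigma^{q/2}+2A\sqrt{D}$ yields $D\le 4\big(A^2+A\sigma^{q/2}\big)$, and the factor $4$ is absorbed into the constant $C$ defining $A$ (the statement's $B$; the $A$/$B$ mismatch is a typo in the paper). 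In short, modulo the cited chaining inequality your write-up supplies strictly more detail than the paper itself, and it matches the route taken in \cite{GR} and \cite{GMPT}.
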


We also recall known facts about Gaussian random variables.
The next lemma is well-known (see e.g. Lemmas~2.3, 2.4 in \cite{vanH}).

\begin{lem}\label{max-gaus}
Let $a=(a_i)_{i\leq n} \in \R^n$ and $(a_i^*)_{i\leq n}$ be the decreasing rearrangement
of $(|a_i|)_{i\leq n}$.
Then
\[
  \E\, \max_{i\leq n} |a_i g_i| \approx \max_{i\leq n }
   \sqrt{\ln(i+3)} \,\cdot a_i^*.
\]
\end{lem}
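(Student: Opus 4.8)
The plan is to reduce to an ordered sequence and then prove matching upper and lower bounds, the latter being the easier of the two. Since the $g_i$ are i.i.d.\ and symmetric, the distribution of $\max_{i\le n}|a_ig_i|$ is unchanged if we replace each $a_i$ by $|a_i|$ and permute the coordinates; hence I would assume without loss of generality that $a_1\ge a_2\ge\cdots\ge a_n\ge 0$, so that $a_i=a_i^*$, and abbreviate $M:=\max_{i\le n}\sqrt{\ln(i+3)}\,a_i$. For the lower bound, note that for every fixed $k\le n$ one has $\max_{i\le n}a_i|g_i|\ge a_k\max_{i\le k}|g_i|$ because $a_i\ge a_k$ for $i\le k$; taking expectations and invoking the classical estimate $\E\max_{i\le k}|g_i|\ge c\sqrt{\ln(k+3)}$ for the maximum of $k$ independent standard Gaussians yields $\E\max_{i\le n}a_i|g_i|\ge c\,a_k\sqrt{\ln(k+3)}$, and maximizing over $k$ gives $\E\max_{i\le n}a_i|g_i|\gtrsim M$.

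For the upper bound I would use the layer-cake formula $\E X=\int_0^\infty\Pro(X>t)\,dt$ together with the union bound and the Gaussian tail estimate $\Pro(|g_i|>s)\le e^{-s^2/2}$ (valid for all $s\ge0$), which gives
\[
\Pro\Big(\max_{i\le n}a_i|g_i|>t\Big)\le\min\Big(1,\ \sum_{i=1}^n e^{-t^2/(2a_i^2)}\Big).
\]
Splitting the integral at $t_0:=CM$ for a suitable absolute constant $C>\sqrt2$, the contribution of $[0,t_0]$ is at most $t_0=CM$ by the trivial bound $\Pro(\cdots)\le1$. For the tail $[t_0,\infty)$ I would estimate each Gaussian integral by $\int_{t_0}^\infty e^{-t^2/(2a_i^2)}\,dt\le\frac{a_i^2}{t_0}\,e^{-t_0^2/(2a_i^2)}$ (inserting the factor $t/t_0\ge1$) and then invoke the defining inequality $a_i\le M/\sqrt{\ln(i+3)}$ of $M$, which forces $t_0^2/(2a_i^2)\ge\frac{C^2}{2}\ln(i+3)$.

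The main obstacle — and the only genuinely quantitative point — is to check that the resulting series over $i$ converges to a quantity of order $M$. After the substitutions above each tail term is bounded by $\frac{M}{C\,\ln(i+3)}\,(i+3)^{-C^2/2}$, so the tail contribution is at most $\frac{M}{C}\sum_{i\ge1}(i+3)^{-C^2/2}/\ln(i+3)$; choosing $C$ with $C^2/2>1$ makes this a convergent series whose sum is an absolute constant, so the tail is $\lesssim M$. Combining the two ranges gives $\E\max_{i\le n}a_i|g_i|\lesssim M$, which together with the lower bound proves the claimed equivalence. I expect the only care needed is in fixing $C$ large enough for convergence while keeping all constants absolute, and in quoting the bound $\E\max_{i\le k}|g_i|\gtrsim\sqrt{\ln(k+3)}$ uniformly in $k$ (including small $k$, where both sides are comparable constants).
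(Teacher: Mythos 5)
Your proof is correct. Note that the paper itself gives no proof of this lemma: it is quoted as well-known, with a pointer to Lemmas~2.3 and~2.4 of van Handel's paper \cite{vanH}, so there is no internal argument to compare against. What you supply is a correct, self-contained proof along the standard lines (and essentially the same route as the cited reference): the reduction to $a_1\ge\cdots\ge a_n\ge 0$ is legitimate by symmetry and exchangeability of the $g_i$; the lower bound via $\max_{i\le n}a_i|g_i|\ge a_k\max_{i\le k}|g_i|$ together with $\E\max_{i\le k}|g_i|\ge c\sqrt{\ln(k+3)}$ (uniformly in $k$, including $k=1$) is sound; and the upper bound via layer cake, the tail estimate $\Pro(|g|>s)\le e^{-s^2/2}$, the insertion of the factor $t/t_0\ge 1$ to integrate the Gaussian tail, and the substitution $a_i\le M/\sqrt{\ln(i+3)}$ gives a series $\sum_i (i+3)^{-C^2/2}/\ln(i+3)$ that indeed converges to an absolute constant once $C^2/2>1$, yielding a tail contribution of order $M$. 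The degenerate case $M=0$ is trivial and all constants are absolute, so the two-sided equivalence follows as you claim.
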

Note that in general the maximum of i.i.d. random variables weighted by coordinates of a vector
$a$ is equivalent to a certain Orlicz norm
$\|a\|_M$, where the function $M$ depends only on the distribution
of random variables (see \cite[Corollary 2]{GLSW-ann} and Lemma~5.2 in \cite{GLSW2}).

The following theorem is the classical Gaussian concentration
inequality (see e.g. \cite{CIS} or inequality (2.35) and
Proposition~2.18 in \cite{Led}).

\begin{theorem}\label{thm:concentration}
Let $n\in\N$ and $(Y,\norm{\cdot}_Y)$ be a Banach space. Let $y_1,\ldots,y_n\in Y$ and  $X=\sum_{i=1}^n g_i y_i$. Then, for every $t>0$,
\begin{equation}\label{eq:gaussian concentration}
\Pro \Big( \big|\norm{X}_Y - \E\norm{X}_Y\big| \geq t\Big) \leq 2
\exp\left(
-{\frac{t^2}{2 \sigma_Y(X)^2}}\right),
\end{equation}
where $\sigma_Y (X) = \sup_{\|\xi\|_{Y^*}=1} \left( \sum_{i=1}^n \abs{\xi(y_i)}^2 \right)^{\frac{1}{2}}$.
\end{theorem}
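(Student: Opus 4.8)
The plan is to realize $\norm{X}_Y$ as a Lipschitz function of the standard Gaussian vector $g=(g_1,\dots,g_n)\in\R^n$, and then to invoke the classical concentration inequality for Lipschitz functions of a Gaussian vector. Concretely, I would introduce the map $F\colon\R^n\to\R$ defined by
\[
F(x)=\Big\|\sum_{i=1}^n x_i y_i\Big\|_Y,
\]
so that $\norm{X}_Y=F(g)$. The whole argument then hinges on controlling the Lipschitz constant of $F$ with respect to the Euclidean metric on $\R^n$, and identifying it with the quantity $\sigma_Y(X)$ appearing in the statement.

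For the Lipschitz estimate, I would fix $x,x'\in\R^n$ and use the reverse triangle inequality in $Y$ to get
\[
\abs{F(x)-F(x')}\le\Big\|\sum_{i=1}^n (x_i-x_i')\,y_i\Big\|_Y.
\]
Writing $z=x-x'$ and expressing the norm by duality, followed by the Cauchy--Schwarz inequality in $\R^n$, yields
\[
\Big\|\sum_{i=1}^n z_i\,y_i\Big\|_Y
=\sup_{\norm{\xi}_{Y^*}=1}\Big|\sum_{i=1}^n z_i\,\xi(y_i)\Big|
\le \norm{z}_2\,\sup_{\norm{\xi}_{Y^*}=1}\Big(\sum_{i=1}^n \abs{\xi(y_i)}^2\Big)^{1/2}
=\sigma_Y(X)\,\norm{z}_2.
\]
Hence $F$ is Lipschitz with constant $\sigma_Y(X)$, which is precisely the constant entering the exponent in \eqref{eq:gaussian concentration}.

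Finally, I would apply the classical Gaussian concentration inequality for Lipschitz functions: if $g$ is a standard Gaussian vector in $\R^n$ and $F$ is $L$-Lipschitz with respect to the Euclidean norm, then $\Pro\big(\abs{F(g)-\E F(g)}\ge t\big)\le 2\exp\!\big(-t^2/(2L^2)\big)$ for every $t>0$. Taking $L=\sigma_Y(X)$ and recalling $F(g)=\norm{X}_Y$ gives exactly \eqref{eq:gaussian concentration}. The only substantive computation here is the Lipschitz-constant bound, which the duality-plus-Cauchy--Schwarz step handles cleanly; the concentration statement for Lipschitz functions is the deep input (equivalent to Gaussian isoperimetry, or derivable via the logarithmic Sobolev inequality), and since the theorem is labelled classical and provided with references, I would treat it as a black box rather than reprove it. The main obstacle, therefore, is conceptual rather than technical: recognising that the supremum defining $\sigma_Y(X)$ is exactly the operator norm controlling how $F$ propagates Euclidean perturbations of the coefficients $x_i$.
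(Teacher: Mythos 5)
Your proof is correct. The paper gives no proof of this statement at all---it is quoted as the classical Gaussian concentration inequality with pointers to the literature (e.g.\ inequality (2.35) and Proposition~2.18 in Ledoux's book)---and your argument, namely showing via the reverse triangle inequality, duality, and Cauchy--Schwarz that $F(x)=\big\|\sum_{i=1}^n x_i y_i\big\|_Y$ is Lipschitz with constant exactly $\sigma_Y(X)$ and then invoking Gaussian concentration for Lipschitz functions as a black box, is precisely the standard derivation found in those references.
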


\begin{remark}\label{re:estimate sigma}
Let $p>2$ and $p^* \leq q\leq \infty$. Let $a=(a_j)_{j\leq n}\in \R^n$ and $X=(a_j g_j)_{j\leq n}$. Then we clearly have
	\[
	\sigma _{\ell_p^n} (X)  =  \max_{j \leq n}|a_j|.
	\]
Thus, Theorem~\ref{thm:concentration} implies for $X=(a_j g_j)_{j\leq n}$
	\begin{align}\label{gaus-conc}
	\Pro \Big(\big| \|X\|_p - \E\|X\|_p \big| > t\Big) \leq 2\, \exp\bigg( - \frac{t^2}{2 \max_{j \leq n}|a_{j}|^2} \bigg).
	\end{align}
Note also that
\begin{align}\label{norm-p}
	\E \|X\|_p  \leq
   \bigg(   \E\, \sum_{j=1}^n |a_j|^p |g_j|^p\bigg)^{1/p}
  =    \gamma_p \|a\|_p.
	\end{align}
\end{remark}

\section{Proof of the main result}

We will apply Theorem \ref{thm:GR} with  $E=\ell_{p^*}^n$, $1<p^* \leq 2$
and $X_1,\dots,X_m$ being the rows of the matrix $G=(a_{ij}g_{ij})_{i,j=1}^{m,n}$.
We start with two lemmas in which we estimate the quantities $\sigma$ and the expectation, appearing in that theorem.

\begin{lem}\label{lem:estimate sigma}
	Let $m,n\in \N$, $1<p^* \leq 2 \leq q$, and for $i\leq m$ let $X_i=(a_{ij}g_{ij})_{j=1}^n$. Then
	\begin{align*}\label{eq:value sigma}
	\sigma& =\sup_{y\in B_{p^*}^n}\bigg(\frac{1}{m}\sum_{i=1}^m \E \big| \langle X_i,y  \rangle\big|^q \bigg)^{1/q} = \frac{\gamma_q}{m^{1/q}}\, \max_{j \leq n}\|(a_{ij})_{i=1}^m\|_q.
	\end{align*}
\end{lem}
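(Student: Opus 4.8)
The plan is to reduce $\sigma$ to a purely deterministic extremal problem by evaluating the inner $q$-th moment exactly, and then to pin down that extremal quantity by a matching pair of bounds.

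First I would fix $i\le m$ and $y\in B_{p^*}^n$ and observe that $\langle X_i,y\rangle=\sum_{j=1}^n a_{ij}g_{ij}y_j$ is a centered Gaussian random variable, being a linear combination of the independent standard Gaussians $g_{i1},\dots,g_{in}$, with variance $\sum_{j=1}^n a_{ij}^2 y_j^2$. Hence $\langle X_i,y\rangle$ has the same distribution as $(\sum_{j=1}^n a_{ij}^2 y_j^2)^{1/2}\,g$ for a standard Gaussian $g$, so that $\E|\langle X_i,y\rangle|^q=\gamma_q^q\,(\sum_{j=1}^n a_{ij}^2 y_j^2)^{q/2}$, using $\E|g|^q=\gamma_q^q$. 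Substituting this into the definition of $\sigma$ gives
$$
\sigma^q=\frac{\gamma_q^q}{m}\,\sup_{y\in B_{p^*}^n}\sum_{i=1}^m\Big(\sum_{j=1}^n a_{ij}^2 y_j^2\Big)^{q/2},
$$
so it remains only to show that the supremum above equals $\max_{j\le n}\|(a_{ij})_{i=1}^m\|_q^q$.

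For the lower bound I would test the supremum on the standard basis vectors $e_{j_0}\in B_{p^*}^n$: at $y=e_{j_0}$ the inner sum collapses to $|a_{ij_0}|^q$, yielding $\sum_{i=1}^m|a_{ij_0}|^q=\|(a_{ij_0})_{i=1}^m\|_q^q$, and maximizing over $j_0\le n$ produces the claimed value as a lower bound. For the matching upper bound, which is the crux of the argument, I would regard $(\sum_j a_{ij}^2 y_j^2)_{i=1}^m$ as the nonnegative combination $\sum_{j=1}^n y_j^2\,(a_{ij}^2)_{i=1}^m$ of the column vectors and apply the triangle inequality in $\ell_{q/2}^m$, which is legitimate precisely because $q\ge 2$, so that $q/2\ge 1$:
$$
\Big(\sum_{i=1}^m\Big(\sum_{j=1}^n a_{ij}^2 y_j^2\Big)^{q/2}\Big)^{2/q}\le\sum_{j=1}^n y_j^2\,\Big(\sum_{i=1}^m|a_{ij}|^q\Big)^{2/q}\le\max_{j\le n}\|(a_{ij})_{i=1}^m\|_q^2\,\|y\|_2^2.
$$
Finally, since $p^*\le 2$ we have $\|y\|_2\le\|y\|_{p^*}\le 1$ on $B_{p^*}^n$, which removes the last factor and matches the lower bound, giving the asserted value of $\sigma$. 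I expect the only real subtlety to be arranging the upper bound so that both structural hypotheses are invoked at exactly the right point, namely $q/2\ge 1$ for Minkowski's inequality in $\ell_{q/2}^m$ and $p^*\le 2$ for the embedding $\|y\|_2\le\|y\|_{p^*}$; the Gaussian reduction of the moment and the choice of extremizer $y=e_{j_0}$ are then routine.
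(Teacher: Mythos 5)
Your proof is correct, and it shares the paper's overall skeleton (reduce $\E|\langle X_i,y\rangle|^q$ exactly to $\gamma_q^q(\sum_j a_{ij}^2y_j^2)^{q/2}$ via Gaussianity, then get the lower bound by testing on standard basis vectors), but the crux --- the matching upper bound --- is handled by a genuinely different inequality. The paper splits $y_j^2 = y_j^{2p^*/q}\,y_j^{2-2p^*/q}$ and applies H\"older with exponents $q/2$ and $q/(q-2)$, which lands directly on $\sum_i(\sum_j a_{ij}^2y_j^2)^{q/2}\le \sum_j |y_j|^{p^*}\,\|(a_{ij})_{i=1}^m\|_q^q \le \max_j\|(a_{ij})_{i=1}^m\|_q^q$; the hypothesis $p^*\le 2$ enters through the exponent comparison $\frac{2q-2p^*}{q-2}\ge p^*$. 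You instead apply the triangle inequality in $\ell_{q/2}^m$ (Minkowski, valid since $q/2\ge 1$) to the column decomposition $\sum_j y_j^2\,(a_{ij}^2)_{i=1}^m$, obtaining the bound $\max_j\|(a_{ij})_{i=1}^m\|_q^2\,\|y\|_2^2$, and then use the embedding $\|y\|_2\le\|y\|_{p^*}$ to invoke $p^*\le 2$. Both routes are one-line applications of classical inequalities, but yours is arguably more modular and transparent, and it has a small technical advantage: the paper's H\"older exponents degenerate at $q=2$ (the exponent $\frac{2q-2p^*}{q-2}$ involves division by $q-2$, so that boundary case needs a separate, if trivial, remark), whereas your Minkowski argument covers $q=2$ uniformly since $\ell_1^m$-Minkowski is just linearity there. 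The paper's version, in exchange, produces the intermediate quantity $\sum_j|y_j|^{p^*}\|(a_{ij})_{i=1}^m\|_q^q$, which makes the role of the $\ell_{p^*}$-ball visible without passing through $\ell_2$.
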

\begin{proof}
For every $i\leq m$, $\langle X_i ,y \rangle = \sum_{j=1}^na_{ij}y_jg_{ij}$,
is a Gaussian random variable with variance $\|(a_{ij}y_j)_{j=1}^n\|_2$. Hence,
\begin{align*}
\sigma^q& =\sup_{y\in B_{p^*}^n}\frac{1}{m}\sum_{i=1}^m \E|\langle X_i,y\rangle|^q
= \frac{\gamma_q^q}{m}\sup_{y\in B_{p^*}^n}\sum_{i=1}^m\bigg(\sum_{j=1}^n|a_{ij}y_j|^2\bigg)^{q/2}.
\end{align*}

Taking standard unit vectors $e_1,\dots,e_n\in B_{p^*}^n$, we immediately obtain
\[
\sup_{y\in B_{p^*}^n}\sum_{i=1}^m\bigg(\sum_{j=1}^n|a_{ij}y_j|^2\bigg)^{q/2}
\geq \max_{j \leq n} \|(a_{ij})_{i=1}^m\|_q^q.
\]
which proves the lower bound for $\sigma$. The corresponding upper bound is a consequence of H\"older's inequality. Indeed, since $q\geq 2$,
\begin{align*}
\sum_{j=1}^na_{ij}^2y_j^2 & = \sum_{j=1}^na_{ij}^2y_j^{2p^*/q}y_j^{2-2p^*/q}
 \leq \bigg(\sum_{j=1}^n|a_{ij}|^q|y_j|^{p^*}\bigg)^{2/q} \cdot  \bigg(\sum_{j=1}^n|y_j|^{\frac{2q-2p^*}{q-2}}\bigg)^{(q-2)/q}.
\end{align*}
Since $p^*\leq 2$, we have $\frac{2q-2p^*}{q-2}\geq p^*$. Therefore, for
$y\in B_{p^*}^n$,
\[
\sum_{j=1}^n|y_j|^{\frac{2q-2p^*}{q-2}} \leq 1.
\]
This implies that for every $y\in B_{p^*}^n$ one has
$$
 \sum_{i=1}^m\bigg(\sum_{j=1}^n|a_{ij}y_j|^2\bigg)^{q/2}
 \leq  \sum_{i=1}^m \sum_{j=1}^n|a_{ij}|^q|y_j|^{p^*}
   =   \sum_{j=1}^n |y_j|^{p^*}     \sum_{i=1}^m |a_{ij}|^q
$$
$$
   =
   \sum_{j=1}^n |y_j|^{p^*}   \|(a_{ij})_{i=1}^m\|_q^q \leq
   \max_{j \leq n} \|(a_{ij})_{i=1}^m\|_q^q,
$$
which completes the proof.


\end{proof}

Now we estimate the expectation from Theorem~\ref{thm:GR}. The proof
is based on the Gaussian concentration, Theorem~\ref{thm:concentration}, and is similar to
 Theorem~2.1 and Remark~2.2 in \cite{vanH}.

\begin{lem}\label{lem: estimate expectation max pnorm X}
	Let $m,n\in \N$, $1<p^*\leq 2\leq q$, and  for $i\leq m$ let $X_i=(a_{ij}g_{ij})_{j=1}^n$. Then
\begin{align*}
	\Big(\E \max_{ i \leq m} \| X_i \|_p^q\Big)^{1/q}&
   \leq  2  \,   \max_{ i \leq m}\E \|X_i\|_p + C\, \gamma_q\, \E
    \max_{i\leq m\atop j\leq n}|a_{ij}g_{ij}| \\
   & \leq 2\, \gamma_p\, \max_{ i \leq m} \|(a_{ij})_{j=1}^n\|_p
      + C\, \gamma_q\, \E \max_{i\leq m\atop j\leq n}|a_{ij}g_{ij}|, 	
\end{align*}
%
%
where $C$ is a positive absolute constant.
\end{lem}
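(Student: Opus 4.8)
The plan is to control the full $q$-th moment by separating the mean of $M:=\max_{i\le m}\|X_i\|_p$ from its fluctuations, treating the fluctuations through a single global application of Gaussian concentration and the mean through a first-moment bound for a maximum of sub-Gaussian variables. The key observation is that $M$ is exactly the norm of the Gaussian vector $X=\sum_{i,j}g_{ij}\,a_{ij}e_{ij}$ in the Banach space $Y=\ell_\infty^m(\ell_p^n)$, that is $\R^{m\times n}$ equipped with $\|(z_{ij})\|_Y=\max_{i\le m}\|(z_{ij})_{j=1}^n\|_p$. First I would compute the weak variance appearing in Theorem~\ref{thm:concentration} for this $X$: since $Y^*=\ell_1^m(\ell_{p^*}^n)$ and $p^*\le 2$, a short computation using $\|\cdot\|_{p^*}\ge\|\cdot\|_2$ gives $\sigma_Y(X)=\max_{i,j}|a_{ij}|=:\sigma$. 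Theorem~\ref{thm:concentration} then yields $\Pro(|M-\E M|>t)\le 2\exp(-t^2/(2\sigma^2))$.

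From this tail bound, integrating $\E|M-\E M|^q=\int_0^\infty qt^{q-1}\Pro(|M-\E M|>t)\dif t$ produces a Gamma integral, whence $(\E|M-\E M|^q)^{1/q}\le C\gamma_q\sigma$ after comparing $\Gamma(q/2+1)^{1/q}$ with $\gamma_q\approx\sqrt q$. Minkowski's inequality then gives $(\E M^q)^{1/q}\le\E M+C\gamma_q\sigma$, so it remains only to bound the first moment $\E M$. For this I would invoke the per-row concentration \eqref{gaus-conc}: writing $W_i=(\|X_i\|_p-\E\|X_i\|_p)_+$, each $W_i$ satisfies $\Pro(W_i>t)\le 2\exp(-t^2/(2\sigma_i^2))$ with $\sigma_i=\max_{j}|a_{ij}|$, and from $\max_i\|X_i\|_p\le\max_i\E\|X_i\|_p+\max_iW_i$ one gets $\E M\le\max_i\E\|X_i\|_p+\E\max_iW_i$.

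The technical heart is bounding $\E\max_{i\le m}W_i$ by $\E\max_{i,j}|a_{ij}g_{ij}|$, and the one pitfall to avoid is collapsing the $\sigma_i$ to their maximum, which would introduce a spurious $\sigma\sqrt{\log m}$ that the right-hand side cannot absorb. Instead, ordering $\sigma_1^*\ge\cdots\ge\sigma_m^*$ and using $\sigma_k^*\le A/\sqrt{\ln(k+3)}$ with $A:=\max_k\sqrt{\ln(k+3)}\,\sigma_k^*$, the union bound $\Pro(\max_iW_i>t)\le 2\sum_k(k+3)^{-t^2/(2A^2)}$ integrates to $\E\max_iW_i\le C A$. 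Finally $A\approx\E\max_i\sigma_i|h_i|$ by Lemma~\ref{max-gaus} (with $h_i$ i.i.d.\ standard Gaussian), and choosing for each $i$ an index $j_i$ with $|a_{ij_i}|=\sigma_i$ identifies $\E\max_i\sigma_i|h_i|$ with $\E\max_i|a_{ij_i}g_{ij_i}|\le\E\max_{i,j}|a_{ij}g_{ij}|$. Assembling the pieces, and using $\sigma\le\gamma_1^{-1}\E\max_{i,j}|a_{ij}g_{ij}|$ together with $\gamma_q\ge 1$, yields the first displayed inequality (in fact with constant $1$ in front of $\max_i\E\|X_i\|_p$, hence a fortiori with the stated $2$); the second inequality is then immediate from \eqref{norm-p}, which gives $\E\|X_i\|_p\le\gamma_p\|(a_{ij})_{j=1}^n\|_p$.
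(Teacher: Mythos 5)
Your proof is correct, but it takes a genuinely different route to the $q$-th moment than the paper. The paper splits $\max_i\|X_i\|_p^q \leq 2^{q-1}\bigl[\max_i\bigl|\|X_i\|_p-\E\|X_i\|_p\bigr|^q+(\max_i\E\|X_i\|_p)^q\bigr]$ and then must control the \emph{$L_q$-norm} of the maximum of per-row deviations, integrating the union-bound tail $\sum_i(i+3)^{-ct^2/2B^2}$ against $qt^{q-1}$ to get the factor $C\sqrt{q}\,B\leq C\gamma_q\,\E\max_{i,j}|a_{ij}g_{ij}|$. You instead split via Minkowski around $\E M$, $M=\max_i\|X_i\|_p$, and absorb the entire $q$-dependence into a single global Gaussian concentration step: viewing $M$ as the norm of $G$ in $\ell_\infty^m(\ell_p^n)$, whose weak variance you correctly compute to be $\max_{i,j}|a_{ij}|$ (your duality argument using $p^*\leq 2$ is sound), so that $(\E|M-\E M|^q)^{1/q}\leq C\gamma_q\max_{i,j}|a_{ij}|\leq C\gamma_1^{-1}\gamma_q\,\E\max_{i,j}|a_{ij}g_{ij}|$. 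The crucial trick that both proofs share --- and which is the real content of the lemma --- is identical: ordering the rows by $\sigma_i=\max_j|a_{ij}|$, invoking Lemma~\ref{max-gaus} with the selected entries $a_{ij(i)}$ to bound $\max_k\sqrt{\ln(k+3)}\,\sigma_k^*$ by $\E\max_{i,j}|a_{ij}g_{ij}|$, which is exactly what prevents the spurious $\sqrt{\log m}\max_{i,j}|a_{ij}|$ term (a pitfall you explicitly flag). The difference is that you only need this trick at the level of \emph{first} moments (for $\E M$), while the paper runs it at the $L_q$ level. What your route buys: a slightly sharper constant ($1$ rather than $2$ in front of $\max_i\E\|X_i\|_p$) and a cleaner conceptual separation of mean versus fluctuation; what the paper's route buys: it never needs to identify $(\ell_\infty^m(\ell_p^n))^*$ or compute the global weak variance, getting by with per-row concentration and one tail-integration computation.
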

\begin{proof}
	For any $a,b>0$ and $q\geq 1$, we have $a^q \leq 2^{q-1}\big( |a-b|^q+b^q\big)$.
	Thus,
	\[
	\max_{1\leq i \leq m} \|X_i\|_p^q \leq 2^{q-1}\bigg[\max_{1\leq i \leq m}\Big| \|X_i\|_p - \E\|X_i\|_p \Big|^q +
\Big(\max_{1\leq i \leq m}\E \|X_i\|_p\Big)^q \bigg].
	\]
	Taking expectation and then the $q$-th root, we obtain
	\[
	\Big( \E \max_{1\leq i \leq m} \|X_i\|_p^q\Big)^{1/q} \leq 2\, \bigg(\E\max_{1\leq i \leq m}\Big| \|X_i\|_p - \E\|X_i\|_p \Big|^q\bigg)^{1/q} + 2\max_{1\leq i \leq m}\E \|X_i\|_p .
	\]
For all $i\leq m$ and $t> 0$ by (\ref{gaus-conc}) we have
	\begin{align}\label{eq:concentration gauss}
	\Pro \Big(\big| \|X_i\|_p - \E\|X_i\|_p \big| > t\Big) \leq 2\, \exp\bigg( - \frac{t^2}{2 \max_{j \leq n}|a_{ij}|^2} \bigg).
	\end{align}
	By permuting the rows of $(a_{ij})_{i,j=1}^{m,n}$, we can assume that
$$
  \max_{j \leq n}|a_{1j}| \geq \dots \geq \max_{ j \leq n}|a_{nj}|.
$$
For each $i\leq m$,  choose $j(i)\leq n$ such that $|a_{ij(i)}| = \max_{ j \leq n}|a_{ij}| $. Clearly,
	\[
	\max_{i\leq m\atop j\leq n} |a_{ij}g_{ij}| \geq \max_{ i \leq m}
|a_{ij(i)}|\cdot|g_{ij(i)}|
	\]
	and hence, by independence of $g_{ij}$'s and Lemma~\ref{max-gaus},
	\begin{align*}
  B:=	\E \max_{i\leq m\atop j\leq n} |a_{ij}g_{ij}| & \geq
       \E \max_{i \leq m} |a_{ij(i)}|\cdot |g_i|
   \geq c \max_{i \leq m} \sqrt{\log (i+3)}\cdot|a_{ij(i)}|,
	\end{align*}
	where the latter inequality follows since $|a_{1j(1)}| \geq \dots \geq |a_{nj(n)}|$.
	Thus, for $i\leq m$,
	\begin{align*}
	\max_{ j \leq n}|a_{ij}|^2 = a_{ij(i)}^2 \leq \frac{B^2}{c\log(i+3)}.
	\end{align*}
By \eqref{eq:concentration gauss} we observe for every $t>0$,
$$
	\Pro\Big(\max_{1\leq i \leq m}\big| \|X_i\|_p - \E \|X_i\|_p \big| >t \Big)
	\leq 2\, \sum_{i=1}^m \exp\bigg( -\frac{ct^2 \log(i+3)}{2 B^2} \bigg)
$$
$$
	 = 2\, \sum_{i=1}^m \bigg( \frac{1}{i+3}\bigg)^{ct^2/2 B^2}
	 \leq 2\, \int_3^\infty x^{-ct^2/2 B^2} \, dx
	 \leq  6\cdot 3^{-ct^2/2 B^2},
$$
	whenever $ct^2/B^2\geq 4$. Integrating the tail inequality proves that
	\[
	\bigg(\E \max_{1\leq i \leq m}\Big| \|X_i\|_p -
    \E \|X_i\|_p \Big|^q\bigg)^{1/q} \leq C_1 \sqrt{q}\, B  \leq C_2 \,
    \gamma_q\, \,   \E\max_{i\leq m\atop j\leq n} |a_{ij}g_{ij}|.
	\]
By the triangle inequality we obtain the first desired inequality,
the second one follows by (\ref{norm-p}).
\end{proof}

We are now ready to present the proof of the main theorem.

\begin{proof}[Proof of Theorem \ref{thm:main}]
	First observe that
	\[
	\E \,\big\|G:\ell_{p^*}^n \to \ell_q^m\big\| \leq \Big(\E\, \big\|G:\ell_{p^*}^n \to \ell_q^m\big\|^q\Big)^{1/q} = \bigg(\E \sup_{y\in B_{p^*}^n} \sum_{i=1}^m\big|\langle X_i,y\rangle\big|^q\bigg)^{1/q}.
	\]
	We have
	\begin{align*}
	 \E  \sup_{y\in B_{p^*}^n} \sum_{i=1}^m\big|\langle X_i,y\rangle\big|^q
	& \leq \E  \sup_{y\in B_{p^*}^n}\left[ \sum_{i=1}^m\big|\langle X_i,y\rangle\big|^q - \E\big|\langle X_i,y\rangle\big|^q\right] + \sup_{y\in B_{p^*}^n}\sum_{i=1}^m \E\big|\langle X_i,y\rangle\big|^q \\
	& = m\cdot \E  \sup_{y\in B_{p^*}^n}\left[ \frac{1}{m}\sum_{i=1}^m\big|\langle X_i,y\rangle\big|^q - \E\big|\langle X_i,y\rangle\big|^q\right] + m\cdot \sigma^q.
	\end{align*}
	Hence, Theorem \ref{thm:GR} applied with $E=\ell_{p^*}^n$ implies
	\begin{align*}
	\E \,\big\|G:\ell_{p^*}^n \to \ell_q^m\big\|^q & \leq m\cdot\big[ B^2 + B\sigma^{q/2}\big] +m\cdot \sigma^q
	 \leq 2 m\, \big(B^2+\sigma^{q}\big),
	\end{align*}
where $B$ and $\sigma$ are defined in that theorem.
	Therefore,
	\begin{align*}
	\Big(\E\, \big\|G:\ell_{p^*}^n \to \ell_q^m\big\|^q \Big)^{1/q}
	& \leq 2^{1/q}m^{1/q}\, \left(B^{2/q} + \sigma \right).
	\end{align*}	
Now,  recall that $T_2(\ell_p^n)\approx\sqrt{p}$ and that $B_{p^*}^n$
has modulus of convexity of power type 2 with $\lambda^{-2}\approx 1/p$.
	Therefore,
	\begin{align*}\label{eq:value of A}
	B^{2/q} &= C^{2/q} \lambda^{8/q} \, T_2^{2/q} (\ell_p^n)\left(\frac{\log m}{m}\right)^{1/q } \Big(
   \E \max_{1\leq i\leq m}\|X_i\|_{p}^q\Big)^{1/q} \\ &=
   C^{2/q}p^{5/q}(\log m)^{1/q}m^{-1/q} \Big( \E \max_{1 \leq i \leq m} \| X_i \|_p^q\Big)^{1/q}.
	\end{align*}
Applying Lemma~\ref{lem:estimate sigma}, we obtain
	\begin{align*}
	& \Big(\E\, \big\|G:\ell_{p^*}^n \to \ell_q^m\big\|^q \Big)^{1/q} \\
	& \leq (2C^2)^{1/q}\cdot p^{5/q}\cdot (\log m)^{1/q} \Big( \E \max_{1 \leq i \leq m} \| X_i \|_p^q\Big)^{1/q} + 2^{1/q}\gamma_q\cdot \max_{1 \leq j \leq n}\|(a_{ij})_{i=1}^m\|_q .
%
%
	\end{align*}
%
%
The desired bound follows now from Lemma~\ref{lem: estimate expectation max pnorm X}.
%
%
\end{proof}

{\bf Acknowledgment.} Part of this work was done while A.L. visited J.P. at the Johannes Kepler University in Linz (supported by FWFM 1628000). We would also like to thank our colleague R. Adamczak for helpful comments.

\bibliographystyle{plain}
\bibliography{random_matrices}

\end{document}